\documentclass[11pt,reqno]{amsart}

\usepackage{amsmath,amssymb,amsthm,mathtools}
\usepackage{enumitem}
\usepackage{hyperref}
\usepackage[margin=1in]{geometry}

\newtheorem{theorem}{Theorem}[section]
\newtheorem{proposition}[theorem]{Proposition}
\newtheorem{lemma}[theorem]{Lemma}

\theoremstyle{definition}
\newtheorem{definition}[theorem]{Definition}
\newtheorem{example}[theorem]{Example}
\theoremstyle{remark}
\newtheorem{remark}[theorem]{Remark}

\numberwithin{equation}{section}

\title[ Exchangeable Testing Against an Unknown Benchmark]
{Exchangeable Testing Against an Unknown Benchmark}

\author{Alexander Gnedin}
\address{School of Mathematical Sciences, Queen Mary University of London, London E1 4NS, United Kingdom}
\email{a.gnedin@qmul.ac.uk}

\subjclass[2020]{Primary 60J70, 60C05; Secondary 60G09, 62E15}

\keywords{exchangeability, explicit predictive probability, benchmark testing, latent rank, Topp--Leone distribution, P\'olya urn, diffusion limit}

\begin{document}

\begin{abstract}
We generate infinite binary exchangeable sequences by sequential comparison of data points against a latent benchmark. Assuming a prior distribution of the benchmark rank \(R_0\) within an unobserved group, we set up the Bayesian machinery that determines the posterior distribution of the running rank \(R_n\) in purely combinatorial terms. This yields an explicitly computable predictive probability of winning against the benchmark. The normalised running rank converges to a latent strength variable \(X\) with polynomial density, possibly Beta-tilted.

Some min-max tournaments lead to particularly simple multiplicative formulae for predictive probabilities related to priors that generalise the Topp--Leone distribution; for that class we analyse the asymptotics of the associated fixed-\(n\) up-down Markov chains. The limiting diffusion has the classical Wright--Fisher variance but a nonlinear drift expressed explicitly via the prior density of the benchmark.

Mixtures of Beta densities are classical objects in the theory of exchangeable sequences. The contribution of the present work is the combinatorial rank-based updating mechanism and the resulting explicit predictive laws for sequential testing against an unknown benchmark.
\end{abstract}

\maketitle

\section{Introduction}

De Finetti's theorem characterises infinite exchangeable sequences by their mixing measures, thus promising a wide variety of explicit examples already on the level of binary sequences. Despite this, the theory of discrete exchangeable sequences and the hierarchy of processes on combinatorial structures derived from them \cite{Pitman2006,Bertoin2006} is almost entirely built on a single concrete instance, introduced independently by Markov (1917) and Eggenberger--P\'olya (1923) \cite{JohnsonKotz1977}, with roots going back to Bayes and Laplace \cite{Zabell2005}. In its most basic form this is the Markov--P\'olya two-colour urn with the initial composition \((1,1)\), which generalises to \((\alpha,\beta)\)-compositions by conditioning and analytic continuation, and to multi- and infinite-colour processes by the splitting property of the Beta distribution. The exceptional analytic and combinatorial tractability of this class rests on the structure of the binomial moments of the uniform distribution, which, owing to the multiplicative properties of the Beta function, yields an explicit predictive rule. There are many other urn processes exhibiting similar behaviour \cite{FlajoletDumasPuyhaubert2006,Janson2006}; however, they lack the fundamental property of infinite exchangeability, which is known to be equivalent to apparently different forms of probabilistic symmetries \cite{Kallenberg2005,PitmanYor1997,Gnedin2026sym}.

In this paper we depart from the reinforcement paradigm underlying the Markov--P\'olya process, shifting the focus to comparison with latent benchmarks. Such comparisons are the everyday language of stress testing, ranking, quality control, and competitive selection. Structurally, the proposed mathematical model is an infinite sequence in which each element can be uniquely ranked relative to the latent benchmarks and the observed sample data, yielding a uniform distribution for the relative rank, lower ranks corresponding to greater strength. The observer's prior information is encapsulated by the rank distribution within the initial group. Each subsequent data element is then tested against the hidden standards, thereby generating classified outcomes that allow one to update the distribution of the running ranks of the benchmarks. See \cite{Hill2009} for foundational work on rank-based prediction, and \cite{Samuels1991,Gnedin2004} for infinite-arrivals models of sequential choice.

Mixtures of Beta densities (Bernstein polynomials of fixed degree, or their Beta-tilted versions) are classical objects that appear throughout the theory of exchangeable sequences and predictive modelling; see, for instance, the recent review of Fortini \& Petrone \cite{FortiniPetrone2024}. The novelty claimed here is not the class of priors itself, but the combinatorial Bayesian machinery that updates the running rank \(R_n\) of a latent benchmark and the resulting explicit predictive probabilities for sequential testing against that benchmark.

To convey the spirit of the results to follow, and highlight the source that motivated this paper, consider a tournament on a group of four individuals arranged in pairs \((u,v)\), \((w,z)\). The strengths within each pair are compared, winners revealed, and a benchmark \(x\) is chosen as the item weakest among the two pairwise winners. It is readily checked that the rank of \(x\) among \(u,v,w,z\) has distribution \((0,\tfrac23,\tfrac13,0)\), which implies that the next item wins against \(x\) with probability \(\tfrac25\cdot1+\tfrac15\cdot\tfrac13=\tfrac7{15}\) (in which case the running rank of \(x\) increases) or loses with the complementary probability (in which case the running rank of \(x\) does not change). After a series of tests with \(A\) wins and \(B\) failures the predictive probability of success in the next game is
\[
(A,B)\mapsto\frac{(A+2)(A+2B+7)}{(A+B+5)(A+2B+6)}.
\]
This formula is not a quotient of two linear forms characteristic of Markov--P\'olya processes. In accord with de Finetti's law of large numbers the normalised running rank of \(x\) relative to all past data has a limit, which turns out to be a random variable with the Topp--Leone distribution. The latter does not belong to the proper Beta family, but is representable in a standard way as a Beta-mixture, and also as a signed linear combination of Beta densities.

We set up a combinatorial Bayesian framework for analysis of binary testing, classify ranking priors in terms of distinguishability of binary experiments and study fluctuation of the latent running rank \(R_n\). We will show that the birth-death process, identical to the reversible up-down chain on the \(n\)th level of the Pascal graph, is approximable by a Wright--Fisher type diffusion with standard variance but nonlinear drift. The proposed model generalises to more complex exchangeable ranking experiments and ordered partition processes \cite{DongGnedinPitman2007,GnedinPitman2005,GnedinPitman2006}, but here we restrict the primary scope to the basic case of binary sequences.

\section{The binary testing model}

Let an unobserved group of \(k\) past players labelled \(P_{-1},\dots,P_{-k}\) have i.i.d.\ \(\mathrm{U}[0,1]\) latent marks, where a smaller mark signifies a stronger player. We arrange the marks in ranking order and denote them \(V_1<\dots<V_k\). By exchangeability, the correspondence between the ranks of the players relative to this group and the ordered marks is uniformly random. That is, the ranking of \(P_{-1},\dots,P_{-k}\) is independent of the ordered marks. We write
\[
V_r\stackrel{d}{=}\mathrm{Beta}(r,k-r+1)
\]
meaning that \(V_r\) has this Beta density
\[
h_{k,r}(x):=k\binom{k-1}{r-1}x^{r-1}(1-x)^{k-r},\qquad x\in[0,1].
\]

Suppose a choice procedure whose outcome only depends on ranks of the players identifies a winner with rank \(R_0\). The strength of the winner, denoted
\[
X:=V_{R_0},
\]
will be called the benchmark or diamond location. Denote
\[
p_0(r):=\mathbb{P}[R_0=r],\qquad r\in[k],
\]
the distribution of the winner rank relative to \(P_{-1},\dots,P_{-k}\). By independence between the ranks and the marks the pdf of the benchmark is the convex combination of Beta pdf's,
\begin{equation}\label{eq:f0}
f_0(x):=\sum_{r=1}^k p_0(r)\,h_{k,r}(x).
\end{equation}

Now suppose players \(P_1,P_2,\dots\) with i.i.d.\ \(\mathrm{U}[0,1]\) strengths \(U_1,U_2,\dots\) (independent of \(V_1,\dots,V_k\)) sequentially arrive and play against the tournament winner. The outcome of each contest is determined by the rule
\begin{equation}\label{eq:win}
P_n\text{ wins}\quad\Longleftrightarrow\quad U_n<X.
\end{equation}
For \(n=1,2,\dots\) the events \eqref{eq:win} are exchangeable. We assume that the prior information of the observer is the distribution \(p_0\), and that the outcomes \eqref{eq:win} are sequentially observed.

A mental picture is formed by Kingman's paintbox construction \cite{Pitman2006}, where the \(U_i\)'s appear as balls that land either in the box \([0,X]\) to the left of the diamond location or in the box \([X,1]\) to the right, ties having probability zero. The anticipation of the exact diamond location evolves with the count of the balls landing on the left (wins) and on the right (losses).

The key for running the Bayesian prior-to-posterior analysis is the following combinatorial fact, well known in the theory of records and sequential choice, and resulting from the inversion (aka factorial, Lehmer) code of permutations:

\begin{quote}
\textbf{The ranking rule:} the rank of \(P_n\) relative to \(P_{-1},\dots,P_{-k},P_1,\dots,P_n\) has uniform distribution on \(\{1,\dots,n+k\}\); moreover, for \(n=1,2,\dots\) these relative ranks are independent.
\end{quote}

\begin{example}
Let \(w_1,w_2,w_3\) be odds representing a rank prior, the distribution of random variable \(R_0\). The relative rank of \(P_1\) is equally likely to settle in any of four slots, think of inserting \(*\) in the sequence. For instance, the configuration
\[
w_1,*,w_2,w_3
\]
contributes to the winning event \(\tfrac14(w_2+w_3)\). Summing over all possibilities and normalising we see that the player wins with probability \(\mathbb{E}R_0\).
\end{example}

\begin{remark}
The construction of a `uniformly random' ranking of a countable set in terms of the total order on i.i.d.\ marks first appeared in an abstract in the context of the secretary problem \cite{Rubin1966}. Measure-theoretically, the rank rule sets up an isomorphism between the space of sequences of relative ranks of \(P_{-1},\dots,P_{-k},P_1,P_2,\dots\) endowed with the product of discrete uniform distributions and the infinite Lebesgue cube \([0,1]^\infty\) (see \cite{Gnedin2026size} for infinite size-biased models of ranking). Informally, in full analogy with the true probability of heads for an unknown coin, the latent strength of a player appears in the role of the absolute rank in an infinite population.

Similarly to Kingman's paintbox, where balls falling in same box are exchangeable, ranking within the set of winners (respectively, losers) does not carry any predictive utility. Comparison with non-benchmark players does matter, but this does not fit in the framework of binary experiment, and will be considered elsewhere.
\end{remark}

\section{Updating rules}

The benchmark model admits two equivalent descriptions. One, combinatorial, is expressed in terms of the running rank of the benchmark. The other is analytic and is expressed through the latent benchmark location \(X\).

\subsection{The running rank}

Let \(R_n\) denote the rank of the benchmark player relative to
\[
P_{-1},\dots,P_{-k},P_1,\dots,P_n.
\]
After \(A\) wins and \(B=n-A\) losses,
\begin{equation}\label{eq:Rn}
R_n=R_0+A.
\end{equation}
Since \(R_0\in[k]\), the support of \(R_n\) is \(\{A+1,\dots,A+k\}\). The posterior distribution is
\begin{equation}\label{eq:post}
\mathbb{P}[R_n=r+A\mid A,B]=\frac{p_0(r)\,[r]_A^\uparrow\,[k-r+1]_B^\uparrow}{C(A,B)},\qquad r\in[k],
\end{equation}
where the normalising constant is
\begin{equation}\label{eq:C}
C(A,B)=\sum_{j=1}^k p_0(j)\,[j]_A^\uparrow\,[k-j+1]_B^\uparrow.
\end{equation}
The posterior distribution \eqref{eq:post} has a simple combinatorial interpretation. From a state \((r,k-r)\) in the lattice, each win increments the rank coordinate and each loss increments the complementary coordinate. The number of directed paths corresponding to composition \((A,B)\) is the hypergeometric count
\[
\binom{r+A-1}{A}\binom{k-r+B}{B}=\frac{[r]_A^\uparrow\,[k-r+1]_B^\uparrow}{A!\,B!}.
\]
Thus the posterior weight of rank \(r\) is obtained by multiplying the prior weight \(p_0(r)\) by the corresponding path count. Normalisation by the partition function \(C(A,B)\) yields the posterior distribution \eqref{eq:post}.

\subsection{Predictive probability}

When the player \(P_{n+1}\) is introduced, there are \(k+n+1\) possible insertion slots. Conditionally on \(R_n\), exactly \(R_n\) of these slots correspond to a win against the benchmark. Therefore
\begin{equation}\label{eq:piE}
\pi(A,B)=\frac{\mathbb{E}[R_n\mid A,B]}{k+n+1}.
\end{equation}
Since
\[
\mathbb{E}[R_n\mid A,B]=\frac{\sum_{r=1}^k(r+A)p_0(r)\,[r]_A^\uparrow\,[k-r+1]_B^\uparrow}{\sum_{r=1}^k p_0(r)\,[r]_A^\uparrow\,[k-r+1]_B^\uparrow},
\]
we obtain
\begin{equation}\label{eq:pi}
\pi(A,B)=\frac1{k+A+B+1}\frac{\sum_{r=1}^k(r+A)p_0(r)\,[r]_A^\uparrow\,[k-r+1]_B^\uparrow}{\sum_{r=1}^k p_0(r)\,[r]_A^\uparrow\,[k-r+1]_B^\uparrow}.
\end{equation}
Using the elementary identity \((r+A)[r]_A^\uparrow=[r]_{A+1}^\uparrow\), the predictive probability may also be written as
\begin{equation}\label{eq:pi8}
\pi(A,B)=\frac1{k+A+B+1}\frac{\sum_{r=1}^k p_0(r)\,[r]_{A+1}^\uparrow\,[k-r+1]_B^\uparrow}{\sum_{r=1}^k p_0(r)\,[r]_A^\uparrow\,[k-r+1]_B^\uparrow}.
\end{equation}
The simplicity of \eqref{eq:pi8} is one of the main features of the model. The predictor is obtained by shifting a single upper factorial in the numerator.

We now connect to the latent variable. Invoking \eqref{eq:piE},
\begin{equation}\label{eq:piX}
\pi(A,B)=\mathbb{P}(U_{n+1}<X\mid A,B)=\frac{\mathbb{E}[R_n\mid A,B]}{k+n+1}.
\end{equation}
This elementary identity is the key updating rule of the model, hence we formulate it as a theorem.

\begin{theorem}[Rank--diamond location identity]
For every rank prior \(p_0\),
\begin{equation}\label{eq:identity}
\frac{\mathbb{E}[R_n\mid A,B]}{k+n+1}=\mathbb{E}[X\mid A,B].
\end{equation}
Consequently,
\begin{equation}\label{eq:moments}
\pi(A,B)=\mathbb{E}[X\mid A,B]=\frac{\mathbb{E}[X^{A+1}(1-X)^B]}{\mathbb{E}[X^A(1-X)^B]}.
\end{equation}
\end{theorem}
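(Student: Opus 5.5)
Both sides of \eqref{eq:identity} equal \(\mathbb{P}(U_{n+1}<X\mid A,B)\); the right-hand equality in \eqref{eq:moments} then follows from Bayes' rule with the density \(f_0\). I will prove these two facts and also give a direct algebraic check through the Beta integrals.

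\textbf{Step 1: the right-hand side.} Conditionally on \(X=x\), the outcomes \(\mathbf 1\{U_i<x\}\), \(i=1,\dots,n+1\), are i.i.d.\ Bernoulli\((x)\), because the \(U_i\) are independent of \((V_1,\dots,V_k,R_0)\) and hence of \(X\). Any particular sequence with \(A\) wins and \(B\) losses therefore has likelihood \(x^A(1-x)^B\). By Bayes' rule the posterior density of \(X\) is proportional to \(x^A(1-x)^Bf_0(x)\), and
\[
\mathbb{P}(U_{n+1}<X\mid A,B)=\mathbb{E}[X\mid A,B]=\frac{\mathbb{E}[X^{A+1}(1-X)^B]}{\mathbb{E}[X^A(1-X)^B]}.
\]
This is exactly \eqref{eq:moments}, once it is combined with \eqref{eq:piX}.

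\textbf{Step 2: the left-hand side.} This is \eqref{eq:piE}, which rests on the ranking rule. The relative rank of \(P_{n+1}\) among the \(k+n+1\) players is uniform and independent of the earlier relative ranks. Those earlier ranks determine \(R_n\), the outcomes \(A,B\), and the conditioning event. So given the past, \(P_{n+1}\) wins with probability \(R_n/(k+n+1)\). Taking conditional expectations given \(A,B\) gives \(\pi(A,B)=\mathbb{E}[R_n\mid A,B]/(k+n+1)\). Comparing with Step 1 yields \eqref{eq:identity}.

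\textbf{Step 3: direct check.} Independently, I will verify the identity algebraically. By \eqref{eq:f0},
\[
\mathbb{E}[X^A(1-X)^B]=\sum_r p_0(r)\,k\binom{k-1}{r-1}\,B(r+A,\,k-r+1+B).
\]
This sum equals \(\sum_r p_0(r)\,[r]_A^\uparrow[k-r+1]_B^\uparrow\,\frac{k!}{(k+A+B)!}\), using \(\Gamma(r+A)=\Gamma(r)[r]_A^\uparrow\) and the same relation for the second argument. The constant \(k!/(k+A+B)!\) is independent of \(r\), so it cancels in the ratio. Replacing \(A\) by \(A+1\) changes the constant to \(k!/(k+A+B+1)!\), which contributes the extra factor \(1/(k+n+1)\). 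The ratio therefore coincides with \eqref{eq:pi8}, and \eqref{eq:identity} holds.

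\textbf{Main obstacle.} The delicate point is the rigorous use of the ranking rule in Step 2. One must ensure that conditioning on the observed sequence is measurable with respect to the earlier relative ranks together with \(R_0\), and that \(R_0\) is independent of the future relative ranks. The latter holds because \(R_0\) is determined by ranks within the initial group, i.e.\ the first \(k\) relative ranks. Step 3 avoids this issue entirely, so it serves as a safeguard.
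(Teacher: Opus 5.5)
Your Steps 1--2 are exactly the paper's argument, so the proposal is correct: both sides of \eqref{eq:identity} are identified with the predictive probability $\mathbb{P}(U_{n+1}<X\mid A,B)$, the rank side by the insertion-slot count \eqref{eq:piE}, and the location side by the independence of $U_{n+1}$ together with Bayes' rule for the moment ratio. Step 3 is a valid extra check that the paper only gestures at; note that passing from \eqref{eq:pi8} to $\mathbb{E}[R_n\mid A,B]/(k+n+1)$ uses the posterior \eqref{eq:post}, which you can get from the same termwise Beta integrals because $X\mid\{R_0=r\}$ has density $h_{k,r}$, so the check is indeed independent of the insertion-slot argument.
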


The second assertion goes back to Hausdorff's problem of moments \cite{Hausdorff1921a,Hausdorff1921b}; here it follows from the combinatorial update rule.

\subsection{The posterior distribution of the benchmark}

The posterior pdf of the diamond location \(X\) remains a convex combination of the same Beta distributions,
\begin{equation}\label{eq:fn}
f_n(t\mid A,B)=\sum_{r=1}^k\mathbb{P}[R_n=r+A\mid A,B]\,h_{k,r}(t).
\end{equation}
Comparing with \eqref{eq:f0}, this just shows that the update factors through formula \eqref{eq:post} for the sufficient statistic engaged in a combinatorial random walk on the Pascal graph.

\section{The law of large numbers for running rank}

By the strong version of de Finetti's theorem the proportion of wins in \(n\) exchangeable contests has an a.s.\ limit. Since ranking occurs on the combinatorial level of permutations we may identify this limit random variable with the hypothetical benchmark \(X\). The relative rank remains throughout not exactly observable. Nevertheless, by the virtue of \eqref{eq:Rn} it eventually finds the diamond location:
\begin{equation}\label{eq:LLN}
\frac{R_n}{n}\to X\quad\text{a.s.\ as }n\to\infty.
\end{equation}

\section{Identifiability}

This section is devoted to the analysis of parameters of the binary testing model. The parameters in our model are the size \(k\) of the initial players group and the prior distribution of winner's rank within the group. In this section we denote this discrete prior \((p_1,\dots,p_k)\); the set of such priors is a \(k\)-simplex (of dimension \(k-1\)), denoted \(\mathcal{P}_k\).

It is sometimes convenient to use parallel homogeneous coordinates, passing to odds signature, that is nonzero nonnegative sequences of weights \((w_1,\dots,w_k)\), which normalise by \(W:=\sum_{r=1}^k w_r\). The two pieces of notation are matched as
\[
(w_1,\dots,w_k)\propto(p_1,\dots,p_k),
\]
so we do not distinguish proportional odds signatures.

\begin{definition}
Two rank priors \((p_1,\dots,p_k)\) and \((q_1,\dots,q_\ell)\) are said to be binary-indistinguishable if they induce the (infinite) binary testing procedures with the same distribution. The definition extends to the representation of priors via their odds signatures.
\end{definition}

The partition of priors in indistinguishability classes of this equivalence relation is trivial within simplex \(\mathcal{P}_k\) of a given degree. Indeed, the Beta densities \(h_{k,r}\) of uniform order statistics comprise a positive Bernstein basis of the space of homogeneous polynomials of degree \(k-1\). Thus the set of such mixtures is a \((k-1)\)-dimensional simplex spanned on the extreme points \((h_{k,r},r\in[k])\). By de Finetti's theorem \eqref{eq:LLN}, the binary experiment identifies its continuous prior, hence the binary indistinguishability at fixed level amounts to equality \((p_1,\dots,p_k)=(q_1,\dots,q_k)\).

\begin{definition}
The elements of the simplex \(\mathcal{H}_k\), that is mixtures of Beta distributions \((h_{k,r},r\in[k])\), are called choice priors of degree \(k\).
\end{definition}

The mixing operation for fixed \(k\) is an isomorphism between \(\mathcal{P}_k\) and \(\mathcal{H}_k\), which allowed us on the latent level to distinguish the discrete priors. Now, the simplices of choice priors comprise an inductive system, \(\mathcal{H}_1\subset\mathcal{H}_2\subset\cdots\), where \(\mathcal{H}_1\) has a sole point \(h_{1,1}\) which is the uniform distribution. Looking across degrees \(k\) brings forward a new phenomenon: the discrete uniform priors, with signatures \((1),(1,1),(1,1,1),\dots\), all correspond to the continuous uniform distribution \(\mathrm{U}[0,1]\). Indeed, choosing at random one of the order statistics yields the uniform distribution, hence the basic Markov--P\'olya urn. Thus, geometrically, the barycenters of the simplices \(\mathcal{H}_k\) all coincide with \(h_{1,1}\), while the barycenters of \(\mathcal{P}_k\)'s live in different dimensions despite producing the same binary processes. Probabilistically, this mismatch is easy to explain: testing against a player randomly chosen from \(P_{-1},\dots,P_{-k}\) makes them exchangeable; a phenomenon valid also for finite series of tests, as long as we stay within the paradigm of comparing only with the winner. A more insightful for our study explanation is that the discrete uniform prior \(p_0(r)\) on \(\mathcal{P}_r\) through the ranking rule elevates to the uniform posteriors; in particular if we take a singleton prior group and update the rank of the benchmark through binary comparisons, the observed process will be precisely the same as the basic Markov--P\'olya urn or Kingman's paintbox with uniform split.

Invoking de Finetti's theorem again, the binary indistinguishability of \((w_1,\dots,w_k)\) and \((v_1,\dots,v_\ell)\), \(k<\ell\), amounts to the equality of expansions in the Bernstein bases
\begin{equation}\label{eq:equal}
\sum_{i=1}^k w_i\,h_{k,i}(x)=\sum_{j=1}^\ell v_j\,h_{\ell,j}(x).
\end{equation}
This relation has interpretation in terms of the general Kerov--Vershik theory of multiplicative branching graphs. The instance \cite{VershikKerov1987} in focus here is the Pascal graph canonically associated with the algebra of symmetric functions in two formal variables \(a,b\) via the Hausdorff degree elevation identity \((a+b)a^Ab^B=a^{A+1}b^B+a^Ab^{B+1}\) factored through \(a+b=1\).

Specialising \(a=x\), \(b=(1-x)\) as real numbers leads to the Bernstein basis conversion identity
\begin{equation}\label{eq:Bernstein}
h_{k,i}(x)=\sum_{j=i}^{\ell-k+i}\frac{\binom{j-1}{i-1}\binom{\ell-j}{k-i}}{\binom{\ell}{k}}h_{\ell,j}(x),
\end{equation}
where the hypergeometric coefficients come from the path-counting in the graph. By substituting this identity in the indistinguishability criterion \eqref{eq:equal} on the linearly independent higher-degree basis, we obtain the following combinatorial classification. Two mixtures are identical if and only if the odds signature of the higher degree is the hypergeometric projection of the lower-degree \(w\):
\begin{equation}\label{eq:vj}
v_j=\sum_{i=1}^k w_i\cdot\frac{\binom{j-1}{i-1}\binom{\ell-j}{k-i}}{\binom{\ell}{k}},\qquad\text{for }j=1,\dots,\ell.
\end{equation}

To translate the elevation technique in the language of testing against the benchmark, we introduce the operation of initial group extension.

\begin{definition}
Let \(P_{-k},\dots,P_{-1}\) be a group of players generating some benchmark rank prior \(p_0\). For integer \(\ell>k\), we say that \(P_{-\ell},\dots,P_{-k},\dots,P_{-1}\) is a prior group extension if the players \(P_{-\ell},\dots,P_{-k-1}\) joined the primary group successively, and the distribution of the winner's rank in the whole growing group was updated according to the rank rule. The resulting benchmark rank distribution \(q_0\) on \([\ell]\) is called extension of the prior \(p_0\).
\end{definition}

\begin{theorem}
For \(\ell>k\) two rank priors \(p_0,q_0\) of degree \(k\) and \(\ell\), respectively, are binary-indistinguishable if and only if \(q_0\) is the extension of \(p_0\).
\end{theorem}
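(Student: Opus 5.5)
The plan is to reduce both directions to the Bernstein criterion \eqref{eq:equal}, in the form of the hypergeometric projection \eqref{eq:vj}. The theorem then amounts to one claim: the extension of \(p_0\) from \(k\) to \(\ell\) players is exactly the rank distribution \(q_0\) given by \eqref{eq:vj} with \(w=p_0\), \(v=q_0\).

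\textbf{Extension reduces to one step.} Extension is defined by adding \(P_{-k-1},\dots,P_{-\ell}\) one at a time, so it is enough to compute one step \(k\to k+1\) and then iterate.

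When a new player joins a group of size \(m\) whose winner has rank \(r\), the ranking rule says the newcomer's rank is uniform on \(m+1\) slots, independently of the past. The winner is unchanged, since the choice procedure has already been run. So:
\begin{itemize}
\item with probability \(r/(m+1)\) the newcomer lands above the winner, and the winner's rank becomes \(r+1\);
\item with probability \((m+1-r)/(m+1)\) it lands below, and the rank stays \(r\).
\end{itemize}
The composed kernel from \(r\in[k]\) to \(j\in[\ell]\) counts up-right lattice paths of length \(\ell-k\) with weights \(r,r+1,\dots\) on up-steps and \(k-r+1,\dots\) on the complementary steps. This is the same Pólya-type count used for \eqref{eq:post}. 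It gives
\[
\mathbb{P}[R=j\mid R_0=i]=\binom{\ell-k}{j-i}\frac{[i]^\uparrow_{j-i}\,[k-i+1]^\uparrow_{\ell-k-(j-i)}}{[k+1]^\uparrow_{\ell-k}} .
\]
A routine factorial manipulation turns this into \(\binom{j-1}{i-1}\binom{\ell-j}{k-i}\big/\binom{\ell}{k}\), the coefficient in \eqref{eq:Bernstein} and \eqref{eq:vj}.

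\textbf{A cleaner route to the same kernel.} The extended group of \(\ell\) players has i.i.d.\ uniform marks, and the winner's mark \(X=V_{R_0}\) is unchanged. The winner's rank among \(\ell\) is therefore \(1+\#\{\text{new marks}<X\}\). Conditionally on \(X\), this count is \(\mathrm{Bin}(\ell-k,X)\). Hence
\[
\sum_j q_0(j)\,h_{\ell,j}=\text{law of }X=\sum_i p_0(i)\,h_{k,i},
\]
by the standard fact that mixing \(h_{\ell,j}\) over \(j-1\sim\text{Beta--binomial}\) reproduces the Beta law. This is precisely \eqref{eq:equal}.

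\textbf{Sufficiency.} If \(q_0\) is the extension of \(p_0\), the second route shows both priors give the same law \(f_0\) of \(X\). By de Finetti's theorem, or directly by \eqref{eq:moments}, the binary testing sequences then have the same distribution, so \(p_0\) and \(q_0\) are binary-indistinguishable.

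\textbf{Necessity.} Suppose \(p_0\) and \(q_0\) are binary-indistinguishable. By \eqref{eq:LLN} the mixing laws agree, so \eqref{eq:equal} holds. Substitute \eqref{eq:Bernstein} into the left side and use linear independence of \(\{h_{\ell,j}\}_{j\in[\ell]}\). This forces \(q_0\) to equal the projection \eqref{eq:vj} of \(p_0\), which by the kernel computation above is the extension of \(p_0\).

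The main obstacle I expect is making sure the kernel identification is rigorous. Two points need care:
\begin{itemize}
\item The extension dynamics must be justified as given by the ranking rule, i.e.\ new players' ranks are independent of the tournament outcome. This holds because the choice procedure depends only on ranks within the original group, and the ranking rule makes later insertions independent of those.
\item The normalisation of \eqref{eq:vj} must match. Summing over \(j\) gives \(\sum_j\binom{j-1}{i-1}\binom{\ell-j}{k-i}=\binom{\ell}{k}\) by Vandermonde, so the coefficients form a stochastic matrix.
\end{itemize}
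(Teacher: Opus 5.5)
Your proposal is correct and takes the paper's route. You reduce indistinguishability to the Bernstein equality \eqref{eq:equal}, expand via \eqref{eq:Bernstein}, and identify the coefficients \eqref{eq:vj} with the extension kernel; where the paper only asserts that the path count coincides with the extension, you verify it by the explicit one-step kernel computation, which is a genuine improvement. One small slip in your ``cleaner route'': the winner's rank among the \(\ell\) players is \(R_0+\#\{\text{new marks}<X\}\), not \(1+\#\{\text{new marks}<X\}\), so the Beta--binomial mixing is over \(j-i\) given \(R_0=i\), and the conclusion \(\sum_j q_0(j)h_{\ell,j}=\sum_i p_0(i)h_{k,i}\) is unaffected.
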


\begin{proof}
The Bernstein basis conversion formula shows that the continuous density generated by \(p_0\) of degree \(k\) expands uniquely in the higher-degree basis of degree \(\ell\) with coefficients given by \eqref{eq:vj}. By definition, the extension \(q_0\) is the law of the running rank after the additional \(\ell-k\) players have been inserted according to the ranking rule; the path-counting argument that produces \eqref{eq:vj} is therefore identical to the combinatorial construction of the extension. Hence the two continuous densities coincide if and only if \(q_0\) is that extension.
\end{proof}

\section{Tilting and transforms}

\subsection{Order reversal}

Flipping the interval amounts to measuring strength in the other direction, which transforms a ranking permutation of degree \([k]\) via \(r\mapsto k-r+1\) and makes minimal order statistics maximal. Both orientations turn useful by multiple interval splitting for construction of infinite shuffles \cite{GnedinOlshanski2006,JackaWarren2007}.

\subsection{Beta tilting}

Beta distributions form the family of conjugate priors for homogeneous Bernoulli trials, therefore Beta tilting
\[
f_0(x)\mapsto f_0(x)\,x^{\alpha-1}(1-x)^{\beta-1},\qquad\alpha,\beta\ge0
\]
acts as a shift in the sufficient statistic
\[
(A,B)\mapsto(A+\alpha,B+\beta),
\]
hence all combinatorial formulas remain intact also for fractional tilting variables.

For integer \(A,B\) the tilting is equivalent to passing from the rank prior \(p_0(r)\) to the Bayesian update \eqref{eq:post}. This operation is not equivalent to the prior group extension changing the support of the distribution.

In general, the analytic counterpart of Beta tilting in terms of weights is the Hahn (Beta-Binomial) re-weighting with rising factorials
\[
\tilde{w}_r\propto[r]_{\alpha-1}^\uparrow[k+1-r]_{\beta-1}^\uparrow\,w_r,\qquad r=1,\dots,k.
\]
Setting \(w_1=\dots=w_k\) yields the pure Hahn weights, in which case the predictive probability becomes the classic Markov--P\'olya rule \(\pi(A,B)=(A+1)/(A+B+2)\) regardless of \(k\).

\subsection{The classic process}

The basic \(\mathrm{U}[0,1]\)-driven process corresponds to the weight sequence \(w_1=1\) (i.e.\ a single latent mark that is itself the diamond location); the corresponding homogeneous polynomial of degree \(0\) is the constant \(1\). Therefore the two-parameter Markov--P\'olya process embeds in this picture in different ways.

\begin{enumerate}[label=(\roman*)]
\item by Beta tilting of the uniform prior,
\[
f_0(x)=1\mapsto c\,x^{\alpha-1}(1-x)^{\beta-1},
\]
which yields the classical predictive rule
\[
\pi(A,B)=\frac{A+\alpha}{A+B+\alpha+\beta};
\]
\item by the pure Hahn re-weighting of any finite string of unit weights
\[
(1,\dots,1)\mapsto\bigl([r]_{\alpha-1}^\uparrow[k+1-r]_{\beta-1}^\uparrow\bigr)_{r=1}^k,
\]
which produces the same predictive probabilities (independent of \(k\));
\item A combinatorial tilting amounts to the weight transform of the kind \((1,1,1,1,1,1)\mapsto(0,0,0,1,1,0,0)\), where a contiguous tail block of \(1\)'s is replaced by \(0\)'s. This instance, by the ranking rule, with reduced composition \((0,0,1,0)\), is the \(\mathrm{Beta}(3,2)\) urn.
\end{enumerate}

This has an important consequence related to any dimension of symmetry.

\begin{theorem}
The basic Markov--P\'olya urn is the symmetrisation of an arbitrary weight sequence: for every \(k\), it is the symmetric mixture of the choice model over the orbit of the permutation group of order \(k\) acting on \((w_1,\dots,w_k)\).
\end{theorem}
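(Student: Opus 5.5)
The plan is to work at the level of the latent density and then invoke de Finetti's theorem, as was done in the identifiability section. Fix \(k\) and a weight sequence \((w_1,\dots,w_k)\), normalised so that \(W=\sum_r w_r=1\). For a permutation \(\sigma\) of \([k]\), the permuted sequence \(w^\sigma_r:=w_{\sigma(r)}\) is a rank prior. By \eqref{eq:f0} it induces the choice density
\[
f^\sigma(x)=\sum_{r=1}^k w_{\sigma(r)}\,h_{k,r}(x).
\]
The symmetric mixture of the corresponding binary testing procedures, with \(\sigma\) uniform over the \(k!\) permutations, is again an infinite exchangeable binary sequence. Its de Finetti mixing measure is the average of the mixing measures, that is, the density
\[
\bar f(x)=\frac1{k!}\sum_{\sigma}f^\sigma(x).
\]
By the strong law \eqref{eq:LLN}, two exchangeable binary processes coincide in distribution exactly when their mixing laws coincide. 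It therefore suffices to show that \(\bar f\equiv1\), the density of the basic Markov--P\'olya urn (the classic process with \(w_1=1\)).

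The averaging step is the following computation. For each fixed \(r\), as \(\sigma\) ranges uniformly over the permutation group, \(w_{\sigma(r)}\) takes each value \(w_i\) exactly \((k-1)!\) times. Hence
\[
\frac1{k!}\sum_\sigma w_{\sigma(r)}=\frac{W}{k}=\frac1k\quad\text{for every }r.
\]
This gives
\[
\bar f(x)=\frac1k\sum_{r=1}^k h_{k,r}(x)=\sum_{r=1}^k\binom{k-1}{r-1}x^{r-1}(1-x)^{k-r}=(x+1-x)^{k-1}=1,
\]
by the binomial theorem, which is the partition of unity of the Bernstein basis. This is exactly the barycenter observation made earlier: the barycenter of \(\mathcal{H}_k\) is \(h_{1,1}\). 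Equivalently, in the language of the prior group extension theorem, the symmetrised signature \((1,\dots,1)\) is the extension of the degree-one signature \((1)\). This follows from \eqref{eq:vj} with \(k=1\), since \(v_j=1/\ell\) for all \(j\).

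As a cross-check, or as an alternative proof avoiding de Finetti, I would verify the statement directly on the predictive rule. The mixture of the processes has finite-dimensional laws given by averaging the probabilities of each finite \((A,B)\)-string. Under prior \(w^\sigma\) such a string has probability
\[
\mathbb{E}^\sigma[X^A(1-X)^B]=\int_0^1 x^A(1-x)^B f^\sigma(x)\,dx .
\]
Averaging over \(\sigma\) gives \(\int_0^1 x^A(1-x)^B\,dx=A!\,B!/(A+B+1)!\), which are the Markov--P\'olya string probabilities. This yields \(\pi(A,B)=(A+1)/(A+B+2)\) through \eqref{eq:moments}.

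The main obstacle is conceptual rather than computational. One has to make precise that the \emph{mixture of processes} corresponds to the \emph{mixture of mixing densities*}, and not to a single process driven by the averaged rank prior. The two agree here only because every finite-dimensional law of the process is linear in the prior density. That linearity holds for string probabilities, but not for the predictive probabilities \eqref{eq:pi8}, which are ratios and must not be averaged directly. For this reason I would state the argument at the level of string probabilities, or equivalently of the de Finetti measure. The permutation-averaging and Bernstein partition-of-unity steps are then routine.
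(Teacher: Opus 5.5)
Your proposal is correct and follows the paper's own argument: symmetrising the weights over the permutation orbit gives a constant signature, whose Bernstein mixture is the uniform density $h_{1,1}$, hence the basic Markov--P\'olya urn. You also spell out two points the paper leaves implicit: the partition-of-unity computation, and the fact that a mixture of processes corresponds to the mixture of densities because the string probabilities $m(A,B)$ are linear in $f_0$.
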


\begin{proof}
The symmetrisation of an arbitrary sequence of weights is a constant sequence, which yields the uniform density and therefore the classical Markov--P\'olya predictive probabilities.
\end{proof}

From the viewpoint of strength testing, this property characterises the basic Markov--P\'olya process as the test which does not introduce bias among the players.

\section{Examples}

The Markov--P\'olya world is linear-rational. The only known to us example of nonlinearity is a quadratic \cite{Gnedin2010species} appearing as a two-parameter extension of an atom of the Pitman--Yor Chinese Restaurant family. For (untilted) tournament densities the predictive probabilities are rational (quotients of polynomials). This section presents a sample of examples where these polynomials nicely factor. Our champion is the first example of trapezoidal distribution.

\begin{example}
For linear polynomial with rational inhomogeneity
\[
f_0(x)\propto x+p/q
\]
the predictive probability is
\[
\pi(A,B)=\frac{m(A+1,B)}{m(A,B)}=\frac{(A+1)\bigl((q+p)A+pB+2q+3p\bigr)}{(A+B+3)\bigl((q+p)A+pB+q+2p\bigr)}.
\]
\end{example}

\begin{example}
Linear weights \((1,2,3)\):
\begin{equation}\label{eq:123}
\pi(A,B)=\frac{(A+1)(3A+B+7)}{(A+B+3)(3A+B+4)}.
\end{equation}
\end{example}

\begin{example}
Mallows (truncated geometric) weights \((1,q,q^2)\):
\begin{equation}\label{eq:Mallows}
\pi(A,B)=\frac{(A+1)\bigl[q^2A^2+2qAB+(5q^2+2q)A+B^2+(4q+3)B+6q^2+4q+2\bigr]}{(A+B+4)\bigl[q^2A^2+2qAB+(3q^2+2q)A+B^2+(2q+3)B+2q^2+2q+2\bigr]}.
\end{equation}
\end{example}

\begin{example}[Two-stage tournament]
Partition \(k=k_1k_2\) players into \(k_1\) groups of size \(k_2\). In each group take the minimum; among the \(k_1\) group-winners take the maximum. The latent mark \(X\) of the overall winner has CDF and density
\begin{align*}
F(x)&=\bigl(1-(1-x)^{k_2}\bigr)^{k_1},\\
f(x)&=k_1k_2(1-x)^{k_2-1}\bigl(1-(1-x)^{k_2}\bigr)^{k_1-1},\qquad0\le x\le1.
\end{align*}
\end{example}

\begin{example}[Three-stage tournament]
Partition \(k=k_1k_2k_3\) players into \(k_1k_2\) groups of size \(k_3\). In each group take the minimum; partition the resulting winners into \(k_1\) groups of size \(k_2\) and again take the minimum; finally take the maximum among the \(k_1\) remaining candidates. The latent mark \(X\) of the overall winner has CDF
\[
F(x)=\Bigl(1-\bigl(1-(1-x)^{k_3}\bigr)^{k_2}\Bigr)^{k_1},\qquad0\le x\le1.
\]
Such forms are typical for AND/OR trees. The predictive probability factors, but is more cumbersome.
\end{example}

\begin{example}[A tournament tree]
Operations:
\begin{enumerate}[label=(\roman*)]
\item bottom level: two independent max of size \(2\);
\item middle level: one min (bottom-max + singleton) and one max (bottom-max + singleton);
\item root: min of the two middle results.
\end{enumerate}
This is a proper rooted tree with exactly six leaves and three distinct stages. The prior
\[
f(x)\propto x(1-x)^2(1-x^2)(1-x^3)
\]
yields rational product-form
\[
\pi(A,B)=\frac{A+1}{A+B+3}\cdot\frac{A+B+4}{A+2B+6}\cdot\frac{A+2B+7}{A+3B+9}.
\]
\end{example}

\section{Choice operators}

A (single-choice) social choice function is a combinatorial operator which takes a structured finite set (e.g.\ a hypergraph) and outputs its point, a winner. Assuming Plott's path-independence condition this is equivalent to ranking; and the rankings stay consistent as the set grows. This fact motivated the term choice priors.

We isolate next the choice priors of combinatorial nature, which we call pure. Suppose that the winner in the group \(P_{-k},\dots,P_{-1}\) only depends on the ranking permutation; thus a choice function can be represented as a mapping \(C:S_k\to[k]\) assigning to permutation the rank of the winner (not the label of the winner).

\begin{proposition}
Assume that a ranking of the players is uniformly random and that \(R_0\) is the benchmark rank. Then the distribution of \(R_0\) has the form
\begin{equation}\label{eq:pure}
\mathbb{P}[R_0=r]=\frac{w_r}{k!},\qquad r\in[k]
\end{equation}
for some integer composition \((w_1,\dots,w_k)\) of the number \(k!\), and conversely.
\end{proposition}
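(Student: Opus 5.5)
The argument is a direct counting of the fibres of the choice map, together with an explicit construction for the converse.

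\emph{Forward direction.} By the standing assumption, the winner's rank depends only on the ranking permutation, so the choice is a map \(C:S_k\to[k]\) and \(R_0=C(\sigma)\) with \(\sigma\) uniform on \(S_k\). Hence
\[
\mathbb{P}[R_0=r]=\frac{\#C^{-1}(r)}{k!}.
\]
Put \(w_r:=\#C^{-1}(r)\). These are nonnegative integers, and since the fibres \(C^{-1}(r)\), \(r\in[k]\), partition \(S_k\), we get \(\sum_r w_r=k!\). Thus \((w_1,\dots,w_k)\) is a weak composition of \(k!\) into \(k\) parts, which gives \eqref{eq:pure}.

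\emph{Converse.} Take any weak composition \((w_1,\dots,w_k)\) of \(k!\). Partition \(S_k\) into consecutive blocks \(B_1,\dots,B_k\) of sizes \(w_1,\dots,w_k\), using any fixed enumeration of \(S_k\) (for instance lexicographic). Define \(C(\sigma)=r\) for \(\sigma\in B_r\). This \(C\) is a choice function depending only on the ranking permutation, and under the uniform ranking it gives \(\mathbb{P}[R_0=r]=w_r/k!\).

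\emph{The one point to check} is that a rank-valued map \(C:S_k\to[k]\) really is a legitimate choice procedure. The paper's convention is that the operator outputs the rank of the winner, not its label, so any \(C\) is admissible and the player holding rank \(C(\sigma)\) is declared the winner. If the converse were meant to require the additional structure discussed above (path independence, consistency as the group grows), an arbitrary assignment would not suffice. In that case I would first try to show that rank-based rules still realise every composition, noting that the proposition as stated does not impose that structure. Everything else reduces to the fibre count.
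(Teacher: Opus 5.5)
Your proof is correct and matches the paper's: the paper's one-line proof is the same fibre count, with \(w_r\) equal to the number of permutations mapped to \(r\) by \(C:S_k\to[k]\). Your explicit block construction for the converse spells out what the paper leaves implicit, and allowing zero parts (a weak composition) is the right reading, since the paper's own examples, e.g.\ \((0,\tfrac23,\tfrac13,0)\), have zero weights.
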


\begin{proof}
The integer \(w_k\) counts permutations mapped to \(k\).
\end{proof}

We define signatures highlighted by \eqref{eq:pure}, the corresponding rank distributions and elements of \(\mathcal{H}_k\) as pure. Thus there are \(2^{k!}\) pure choice priors in each degree \(k\).

The mapping we just described is the combinatorial ground for the benchmark testing paradigm, explaining the main step away from the Markov--P\'olya world. Permutation viewed as mapping has a natural univariate statistic: the number of cycles. Tilting the number of cycles yields a distinguished one-parameter family of Ewens measures. Further tilting the sizes of cycles adds the Pitman--Yor parameter, and a two-parameter extension by randomisation of the \(\alpha=-1\) case mentioned above.

Permutations viewed as orders carry a single exchangeable measure introduced in \cite{Rubin1966}, though minor steps in the direction of partial exchangeability lead to many parametric families \cite{Gnedin2010coherent,Gnedin2006constrained}; for instance tilting both upper and lower records gives a closest relative of the Ewens measure, however does not allow full symmetry by fixing extreme ranks.

The benchmark processes constructed here is the first step to show that the exchangeable order is nevertheless capable of producing a myriad of fully exchangeable phenomena, being transformed through Plott's social choice operators.

\section{The Topp--Leone family}

We are back to the example mentioned in the Introduction. The basic Topp--Leone density \cite{ToppLeone1955,KotzVandorp2004} with shape parameter \(\nu\) has the form
\begin{equation}\label{eq:TL}
f(x\mid\nu)=2\nu\,x^{\nu-1}(1-x)(2-x)^{\nu-1},\qquad0<x<1,\quad\nu>0,
\end{equation}
having the distribution function
\[
F(x\mid\nu)=x^\nu(2-x)^\nu=\bigl(1-(1-x)^2\bigr)^\nu.
\]
We denote this density \(\mathrm{TL}(\nu)\) and denote \(\mathrm{TL}(\nu,\alpha,\beta)\) its Beta-tilted version, thus \(\mathrm{TL}(\nu,\alpha,\beta)\) has density
\begin{equation}\label{eq:TLtilt}
f(x\mid\nu,\alpha,\beta)\propto f(x\mid\nu)\,x^\alpha(1-x)^\beta,\qquad\alpha>-\nu,\ \beta>-1,
\end{equation}
where the range of admissible parameters is dictated by integrability. See \cite{KotzVandorp2004} for the exciting analysis of modal properties across the range of parameters.

For general \(\nu>0\), \eqref{eq:TL} is not a finite mixture of uniform order-statistic densities of any fixed group size \(k\); computing moments will lead to hypergeometrics. Consequently there is no finite discrete prior \(p_0\) on \([k]\) we need to start with.

However, when \(\nu\) is a positive integer the binomial expansion of \((2-x)^{\nu-1}\) represents \(\mathrm{TL}(\nu)\) as a signed linear combination of Beta kernels, leading to the binomial moments
\begin{equation}\label{eq:mTL}
m(A,B\mid\nu)=2\nu\sum_{i=0}^{\nu-1}\binom{\nu-1}{i}2^{\nu-1-i}(-1)^i\,\mathrm{B}(A+\nu+i,B+2).
\end{equation}
The same expansion is valid for Beta-tilted \(\mathrm{TL}(\nu,\alpha,\beta)\), with \((A,B)\) in the right-hand side replaced by \((A+\alpha,B+\beta)\): \(m(A,B\mid\nu,\alpha,\beta)=m(A+\alpha,B+\beta\mid\nu)\).

The inclusion-exclusion form of \eqref{eq:mTL} is typical for choice priors related to tournament schemes; this one is a \(k=2\nu\) 2-stage tournament of Example 5.

\begin{enumerate}[label=(\roman*)]
\item at the first stage the exchangeable players are organised in pairs, and a weaker player in each pair is revealed,
\item at the second stage, these \(\nu\) weaker players are compared and the tournament winner becomes the strongest of these \(\nu\).
\end{enumerate}

The quotient, via the Beta shift identity \(\mathrm{B}(a+1,c)=\mathrm{B}(a,c)\,a/(a+c)\), simplifies to an explicit rational function of the state. In particular,
\begin{align}
\pi_1(A,B)&=\frac{A+1}{A+B+3},\label{eq:pi1}\\
\pi_2(A,B)&=\frac{(A+2)(A+2B+7)}{(A+B+5)(A+2B+6)},\label{eq:pi2}\\
\pi_3(A,B)&=\frac{(A+3)\bigl(A^2+4AB+4B^2+17A+36B+76\bigr)}{(A+B+7)\bigl(A^2+4AB+4B^2+15A+32B+60\bigr)}.\label{eq:pi3}
\end{align}
The case \(\nu=1\) is the classical \(\mathrm{Beta}(1,2)\) Markov--P\'olya transition rule \eqref{eq:pi1}.

\section{The up-down Markov chain and its diffusion limit}

We now refine the law of large numbers \eqref{eq:LLN} to a diffusion approximation for the fluctuations of the running rank. The construction and the resulting diffusion depend on the prior only through its binomial moment function
\[
m(A,B):=\mathbb{E}[X^A(1-X)^B]=\int_0^1 x^A(1-x)^B f_0(x)\,dx,
\]
already used in \eqref{eq:pi}--\eqref{eq:moments}.

\subsection{The up-down chain}

Fix a sufficiently smooth prior density \(f_0\) on \([0,1]\) and a historic, possibly fractional, composition \((\alpha,\beta)\) with \(m(\alpha,\beta)>0\). Write \(A=\alpha+b\), \(B=\beta+w\) for observed composition \((b,w)\), \(n=b+w\). The marginal law of the number of black balls after \(n\) draws is the mixture of \(\mathrm{Bin}(n,x)\) over \(x\) sampled from \(f_0(\,\cdot\mid\alpha,\beta)\), explicitly
\begin{equation}\label{eq:rho}
\rho_n(b):=\binom{n}{b}\frac{m(A,B)}{m(\alpha,\beta)},\qquad b=0,\dots,n.
\end{equation}
The up-down Markov chain hops between the levels \(n\) and \(n-1\) of the Pascal graph according to the rules:

\begin{enumerate}[label=(\roman*)]
\item \textbf{Down move:} a ball is chosen uniformly at random among the \(n\) balls and removed, leaving either \((b-1,w)\) or \((b,w-1)\). This is the standard backward transition.
\item \textbf{Up move:} a black or white ball is added according to the particular transition law \(\pi(A,B)=m(A+1,B)/m(A,B)\) of \eqref{eq:moments}.
\end{enumerate}

The combination of the two moves preserves \(n=b+w\) and defines a birth--death chain on the level set \(\{(b,w):b+w=n\}\). The transition probabilities out of the composition \((b,w)\) are
\begin{align}
p_+(b,w)&:=\mathbb{P}[(b,w)\to(b+1,w-1)]=\frac{w}{n}\pi(b,w-1),\label{eq:p+}\\
p_-(b,w)&:=\mathbb{P}[(b,w)\to(b-1,w+1)]=\frac{b}{n}\bigl(1-\pi(b-1,w)\bigr),\label{eq:p-}\\
p_0(b,w)&:=\mathbb{P}[(b,w)\to(b,w)]=1-p_+(b,w)-p_-(b,w),\label{eq:p0}
\end{align}
interpreted as in the case of the removed ball's colour and its replacement, as before.

\begin{remark}
When \(f_0\) is the mixture \eqref{eq:f0} for a finite degree rank prior \(p_0\) on \([k]\), the up-move is literally the combinatorial ranking-rule update of Section 2: \(\pi(b,w-1)\) equals the predictive probability \eqref{eq:pi} at the reduced composition, and \(m(A,B)\) is, up to normalisation, the partition function \(C(A,B)\) of \eqref{eq:C}. This is the combinatorial content underlying the up-down chain in general. The Topp--Leone family is the case where, for non-integer \(\nu\), \(f_0\) is not of this finite-mixture form, so the chain must be described directly through \(m(A,B)\); for integer \(\nu\) it reverts to the finite combinatorial picture.
\end{remark}

The next fact is an instance of a very general result. We give a complete proof to review some historic perspective.

\begin{lemma}[Reversibility]
For every prior density \(f_0\) and every admissible historic composition \((\alpha,\beta)\), the up-down chain on \(\{(b,w):b+w=n\}\) is reversible with respect to the law \(\rho_n\) of \eqref{eq:rho}:
\[
\rho_n(b)\,p_+(b,w)=\rho_n(b+1)\,p_-(b+1,w-1),\qquad b=0,\dots,n-1.
\]
\end{lemma}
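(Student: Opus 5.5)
We verify detailed balance directly from the definitions \eqref{eq:rho}, \eqref{eq:p+}, \eqref{eq:p-}, using the moment form of the predictive probability $\pi(A,B)=m(A+1,B)/m(A,B)$ from \eqref{eq:moments} together with the additivity of the binomial moments. Throughout, $\pi$ is evaluated at the shifted composition: $\pi(b,w-1)$ means $m(\alpha+b+1,\beta+w-1)/m(\alpha+b,\beta+w-1)$, and similarly for $\pi(b,w)$.

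The key auxiliary identity is the Hausdorff degree-elevation relation $x^A(1-x)^B=x^{A+1}(1-x)^B+x^A(1-x)^{B+1}$. Integrating it against $f_0$ gives
\[
m(A,B)=m(A+1,B)+m(A,B+1),
\]
and hence
\[
1-\pi(A,B)=\frac{m(A,B+1)}{m(A,B)}.
\]
This identity is valid for fractional $(A,B)$, since only integrability is used, and it is guaranteed by $m(\alpha,\beta)>0$.

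With this in hand we compute both sides of the detailed-balance equation. For the left side, with $A=\alpha+b$, $B=\beta+w$,
\[
\rho_n(b)\,p_+(b,w)=\binom{n}{b}\frac{m(A,B)}{m(\alpha,\beta)}\cdot\frac{w}{n}\cdot\frac{m(A+1,B-1)}{m(A,B-1)}.
\]
For the right side, the state is $(b+1,w-1)$, so the relevant composition is $(A+1,B-1)$. Then $p_-(b+1,w-1)=\frac{b+1}{n}\bigl(1-\pi(b,w-1)\bigr)$, where $\pi(b,w-1)$ is evaluated at $(A,B-1)$; by the identity above this equals $\frac{b+1}{n}\,m(A,B)/m(A,B-1)$. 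Multiplying by $\rho_n(b+1)=\binom{n}{b+1}m(A+1,B-1)/m(\alpha,\beta)$ and using $\binom{n}{b}w=\binom{n}{b+1}(b+1)$ (since $w=n-b$), both sides reduce to
\[
\binom{n}{b+1}\frac{b+1}{n}\,\frac{m(A,B)\,m(A+1,B-1)}{m(\alpha,\beta)\,m(A,B-1)}.
\]
They therefore coincide.

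The only delicate point is bookkeeping: matching the arguments of $\pi$ in \eqref{eq:p+}--\eqref{eq:p-} with the historic shift $(\alpha,\beta)$, and checking that the up-move in $p_-(b+1,w-1)$ after removing a black ball starts from $(b,w-1)$, the same intermediate level-$(n-1)$ state as in $p_+(b,w)$. Once both transitions are seen to pass through this common state, the cancellation is immediate.

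As a remark, this structure explains the general principle behind the lemma. The chain is a down-then-up composition through level $n-1$, with $\rho_n$ and $\rho_{n-1}$ coherent under the down kernel by exchangeability. Such a composition is automatically reversible, and the calculation above is exactly that argument made explicit.
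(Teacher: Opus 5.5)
Your proof is correct and follows the paper's own argument: both obtain $1-\pi(b,w-1)=m(A,B)/m(A,B-1)$ at the common intermediate state $(b,w-1)$ from the Hausdorff recursion, and then match the two sides using $\binom{n}{b}w=\binom{n}{b+1}(b+1)$. Your closing remark is slightly loose, since coherence $\rho_n D=\rho_{n-1}$ alone does not make a down--up composition reversible; you also need the up kernel to be the adjoint of the down kernel, $\rho_{n-1}(y)U(y,x)=\rho_n(x)D(x,y)$, which holds here because the up move is the Bayesian predictive rule.
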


\begin{proof}
Write \(A=\alpha+b\), so that with \(B=\beta+w\),
\[
\pi:=\pi(b,w-1)=\frac{m(A+1,B-1)}{m(A,B-1)}
\]
is the value appearing in both transition probabilities,
\[
p_+(b,w)=\frac{w}{n}\pi,\qquad p_-(b+1,w-1)=\frac{b+1}{n}(1-\pi).
\]
By \eqref{eq:rho}, \(\rho_n(b)=\binom{n}{b}m(A,B)/m(\alpha,\beta)\) and \(\rho_n(b+1)=\binom{n}{b+1}m(A+1,B-1)/m(\alpha,\beta)\). The Hausdorff \cite{Hausdorff1921a,Hausdorff1921b} recursion for binomial moments, valid for any probability distribution on \([0,1]\),
\begin{equation}\label{eq:Hausdorff}
m(A,B-1)=m(A,B)+m(A+1,B-1),
\end{equation}
gives directly \(1-\pi=m(A,B)/m(A,B-1)\). Together with
\[
\binom{n}{b}\frac{w}{n}=\binom{n}{b+1}\frac{b+1}{n}=\binom{n-1}{b},
\]
this yields
\[
\rho_n(b)\,p_+(b,w)=\binom{n-1}{b}(1-\pi)\frac{m(A+1,B-1)}{m(\alpha,\beta)}=\rho_n(b+1)\,p_-(b+1,w-1).
\]
\end{proof}

\begin{remark}
Reversibility is common for up-down chains of Diaconis and Fill \cite{DiaconisFill1990} and their combinatorial relatives on branching graphs \cite{BorodinOlshanski2009,DongGnedinPitman2007,GnedinPitman2006,Petrov2013,Petrov2009}. Nothing in the proof uses a specific \(f_0\): only the Hausdorff recursion \eqref{eq:Hausdorff}, which holds for every probability distribution on \([0,1]\).
\end{remark}

\subsection{Diffusion limit}

To scale the up-down chain, represented as the birth--death process on the \(n\)th level \(\{(b,w):b+w=n\}\), set \(x=b/n\in[0,1]\) and accelerate time by the factor \(n^2\). As \(n\to\infty\) with \(b/n\to x\), the arguments of \(\pi\) in \eqref{eq:p+}--\eqref{eq:p-} both diverge with ratio tending to \(x\). Since \(\pi(A,B)=\mathbb{E}[X\mid A,B]\) is the posterior mean of \(X\) after observing a fraction \(x\) of black balls in a large number of trials, Bayesian consistency -- a form of de Finetti's theorem, valid whenever \(f_0\) is continuous and positive at \(x\) -- gives
\begin{equation}\label{eq:piinf}
\pi^\infty(x):=\lim_{n\to\infty}\pi(\lfloor xn\rfloor,\lfloor(1-x)n\rfloor)=x,\qquad0<x<1,
\end{equation}
for every admissible prior \(f_0\). Because the leading terms of \(p_+(b,w)\) and \(p_-(b,w)\) both converge to \(x(1-x)\), they cancel at order \(1\), and the nontrivial drift emerges only at the next order, \(O(1/n)\). Similarly,
\begin{equation}\label{eq:sigma}
\sigma^2(x):=\lim_{n\to\infty}n\bigl(p_+(b,w)+p_-(b,w)\bigr)=2x(1-x),
\end{equation}
independently of \(f_0\), because to leading order both terms in \eqref{eq:p+}--\eqref{eq:p-} tend to \(x(1-x)\): the diffusion coefficient is universal, and only the drift will depend on \(f_0\).

Rather than expand \(\pi\) to order \(1/n\) directly, it is more spare to determine the limiting drift from Lemma~1 together with the classical correspondence between reversible one-dimensional diffusions and their speed densities (cf.\ \cite{EthierKurtz1986}).

\begin{proposition}
For \(x=\lfloor xn\rfloor/n\) fixed, \(n\rho_n(\lfloor xn\rfloor)\to f_0(x)\) as \(n\to\infty\).
\end{proposition}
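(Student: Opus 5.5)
The plan is to recognise \(n\rho_n(b)\) as the integral of the prior against a Beta kernel that concentrates at \(x\), so that the statement becomes an approximate-identity (Laplace-type) argument. I read the statement with the tilted density \(g(t):=f_0(t\mid\alpha,\beta)=f_0(t)\,t^\alpha(1-t)^\beta/m(\alpha,\beta)\) in place of \(f_0\). For \(\alpha=\beta=0\) this is \(f_0\) itself; in general the limit will be \(g(x)\). I take \(0<x<1\) and assume \(g\) is continuous at \(x\), as in the standing smoothness assumption.

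\textbf{Step 1: integral representation.} By \eqref{eq:rho} and the definition of \(m\), with \(b=\lfloor xn\rfloor\) and \(w=n-b\),
\[
\rho_n(b)=\int_0^1\binom{n}{b}t^{b}(1-t)^{w}\,g(t)\,dt .
\]
The Beta integral gives \(\int_0^1\binom{n}{b}t^b(1-t)^{n-b}\,dt=1/(n+1)\). Hence, in the notation of Section~2,
\[
(n+1)\rho_n(b)=\int_0^1 h_{n+1,b+1}(t)\,g(t)\,dt ,
\]
so \((n+1)\rho_n(b)\) is the expectation of \(g\) under the \(\mathrm{Beta}(b+1,n-b+1)\) law. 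Since \(n/(n+1)\to1\), it suffices to show this expectation tends to \(g(x)\).

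\textbf{Step 2: near region.} The kernel \(h_{n+1,b+1}\) has mean \((b+1)/(n+2)\to x\) and variance \(O(1/n)\). Fix \(\varepsilon>0\) and choose \(\delta>0\) with \(|g(t)-g(x)|<\varepsilon\) whenever \(|t-x|<\delta\). On \(\{|t-x|<\delta\}\) the integral is then within \(\varepsilon\) of \(g(x)\) times the kernel mass of that set. By Chebyshev, that mass tends to \(1\).

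\textbf{Step 3: far region (main obstacle).} On \(\{|t-x|\ge\delta\}\) I only know that \(g\) is integrable, not bounded; it may blow up at the endpoints, for instance under fractional tilting with \(\alpha<0\). So I will bound the kernel uniformly rather than use Chebyshev. For \(|t-x|\ge\delta\), compare \(t^b(1-t)^{n-b}\) with its maximum at \(t=b/n\). By strict concavity of \(t\mapsto x\log t+(1-x)\log(1-t)\), this gives
\[
\sup_{|t-x|\ge\delta}h_{n+1,b+1}(t)\le (n+1)\binom{n}{b}\Bigl(\tfrac bn\Bigr)^b\Bigl(1-\tfrac bn\Bigr)^{n-b}e^{-cn}\le (n+1)e^{-cn}
\]
for some \(c=c(\delta,x)>0\) and all large \(n\). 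The last inequality holds because a binomial probability is at most \(1\). Therefore the far contribution is at most \((n+1)e^{-cn}\int_0^1 g\to0\).

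Combining Steps 2 and 3 and letting \(\varepsilon\downarrow0\) yields \(n\rho_n(\lfloor xn\rfloor)\to g(x)\), which equals \(f_0(x)\) in the untilted case. The only delicate point is the uniform exponential bound in Step 3. It requires \(x\) to stay away from \(0\) and \(1\), so that \(b/n\) remains in a compact subinterval of \((0,1)\) and the concavity gap \(c\) is uniform in \(n\).
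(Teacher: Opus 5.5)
Your argument is correct and is essentially the paper's: both identify $n\rho_n(b)$, up to the factor $n/(n+1)$, with the integral of the tilted density $f_0(\cdot\mid\alpha,\beta)$ against a binomial/Beta kernel that concentrates at $x$. Both therefore conclude that the limit is $f_0(x\mid\alpha,\beta)$, which reduces to $f_0(x)$ when $\alpha=\beta=0$. Your near/far split, with the uniform exponential bound on the far region, makes rigorous the step the paper covers by ``weak convergence to $\delta_x$''; that step alone would not handle a tilted density that is unbounded at the endpoints.
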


\begin{proof}
By \eqref{eq:rho}, \(\rho_n(b)=\binom{n}{b}\int_0^1 x^b(1-x)^{n-b}f_0(x\mid\alpha,\beta)\,dx\) is the \(b\)th weight of the Bayes mixture of \(\mathrm{Bin}(n,x)\) over \(x\sim f_0(\,\cdot\mid\alpha,\beta)\), and is thus governed by Hausdorff's moment problem \cite{Hausdorff1921a,Hausdorff1921b} for this mixing measure. By the local central limit theorem, the binomial kernel \(\binom{n}{b}x^b(1-x)^{n-b}\), as a function of \(x\), concentrates on a window of width \(O(n^{-1/2})\) about \(x=b/n\), with total mass \(1\); consequently \(n\binom{n}{b}x^b(1-x)^{n-b}\,dx\to\delta_x(dx)\) weakly, and integrating the continuous density \(f_0(\,\cdot\mid\alpha,\beta)\) against this approximate identity gives \(n\rho_n(b)\to f_0(x\mid\alpha,\beta)\).
\end{proof}

Combined with Lemma~1, Proposition~2 identifies the stationary/reversible density of the diffusion limit: it must be \(f_0(\,\cdot\mid\alpha,\beta)\). For a generator \(\mathcal{L}f=a(x)f''+\mu(x)f'\) reversible with respect to a density \(\pi\), the stationarity equation \((a\pi)'=\mu\pi\) (obtained by a single integration of the forward equation \((a\pi)''-(\mu\pi)'=0\), with natural boundary conditions on \([0,1]\)) determines the drift once \(a\) and \(\pi\) are known:
\begin{equation}\label{eq:mu}
\mu(x)=a'(x)+a(x)\bigl(\log\pi(x)\bigr)'.
\end{equation}
Here \(a(x)=x(1-x)\) by \eqref{eq:sigma} (writing the generator as \(\mathcal{L}f=x(1-x)f''+\mu f'\), i.e.\ \(\sigma^2=2a\)), and \(\pi(x)=f_0(x\mid\alpha,\beta)\). Substituting into \eqref{eq:mu} yields the following general theorem.

\begin{theorem}[Diffusion limit]
Let \(f_0\) be a prior density on \([0,1]\), differentiable and positive on \((0,1)\) with \((\log f_0)'\) locally integrable there. Started from a composition with historic parameters \((\alpha,\beta)\), the rescaled up-down chain
\[
Y_n(t):=n^{-1}B_{\lfloor n^2 t\rfloor}
\]
converges weakly, as \(n\to\infty\), to the \([0,1]\)-valued diffusion with generator
\begin{equation}\label{eq:gen}
\mathcal{L}_{f_0,\alpha,\beta}f(x)=x(1-x)f''(x)+\mu_{f_0}(x;\alpha,\beta)f'(x),
\end{equation}
where
\begin{equation}\label{eq:mu0}
\mu_{f_0}(x;\alpha,\beta)=(1-2x)+x(1-x)\bigl(\log f_0(x\mid\alpha,\beta)\bigr)'.
\end{equation}
\end{theorem}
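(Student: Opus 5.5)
The plan is the standard generator-convergence route for birth--death chains (Ethier--Kurtz, Ch.~4 and 8). The drift identification already obtained from reversibility (Lemma~1) and Proposition~2 serves as a consistency check rather than the proof itself. Let \(\mathcal{L}_n\) be \(n^2\) times the one-step generator of the level-\(n\) chain, acting on \(g\in C^\infty[0,1]\) restricted to the grid \(\{b/n\}\):
\[
\mathcal{L}_ng(b/n)=n^2\bigl[p_+(g(\tfrac{b+1}{n})-g(\tfrac bn))+p_-(g(\tfrac{b-1}{n})-g(\tfrac bn))\bigr].
\]
By Taylor expansion,
\[
\mathcal{L}_ng = n(p_+-p_-)\,g' + \tfrac12 n\,(p_++p_-)\,g'' + O\bigl(n^{-1}\|g'''\|\bigr).
\]
By \eqref{eq:sigma}, the second coefficient tends to \(x(1-x)\). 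The whole task therefore reduces to showing that \(n(p_+-p_-)\to\mu_{f_0}(x;\alpha,\beta)\) locally uniformly on \((0,1)\), and then handling the boundary. The process is tracked as the fraction of one fixed colour; which colour is a matter of orientation, so I identify \(B\) with \(b\) and orient consistently.

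For the drift, write \(n(p_+-p_-)=w\,\pi(b,w-1)-b\,(1-\pi(b-1,w))\). I will use the exact Laplace-type representation
\[
\pi(A,B)=\frac{\int x^{A+1}(1-x)^B f\,dx}{\int x^A(1-x)^B f\,dx},
\]
with \(f=f_0(\cdot\mid\alpha,\beta)\). The second-order Laplace expansion of the posterior mean gives
\[
\pi(A,B)=\frac{A+1}{A+B+2}+\frac{\hat x(1-\hat x)}{A+B}\,(\log f)'(\hat x)+o(1/n),\qquad \hat x=\frac{A}{A+B}.
\]
The first term is the exact Beta\((A+1,B+1)\) mean, and the correction comes from integrating by parts against the Beta kernel: \(\mathbb{E}_{\mathrm{Beta}}[X f(X)]/\mathbb{E}_{\mathrm{Beta}}[f(X)]\) with posterior variance \(\hat x(1-\hat x)/n\). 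Substituting \(A=b+\alpha\), \(B=w-1+\beta\) and the analogous expression for \(\pi(b-1,w)\), the order-\(n\) terms \(w\hat x-b(1-\hat x)\) cancel. What remains at order \(1\) collects into \((1-2x)\) from the Beta-mean shifts (\(+1\) and \(+2\)), the \(\alpha,\beta\) offsets, which recombine into the tilt \(x^\alpha(1-x)^\beta\) inside \(f\), and the term \(x(1-x)(\log f)'(x)\). This reproduces \eqref{eq:mu0} and agrees with the reversibility computation \eqref{eq:mu}.

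Tightness in \(D[0,\infty)\) and identification of the limit then follow from Ethier--Kurtz Theorem~4.8.2 (convergence of generators on a core), once I know the martingale problem for \eqref{eq:gen} is well posed. One-dimensional well-posedness holds via Feller's boundary classification, using scale and speed built from \(f\) and \(a(x)=x(1-x)\); local integrability of \((\log f_0)'\) makes the scale function well defined. Initial conditions converge because \(Y_n(0)\to x_0\).

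I expect the main obstacle to be the boundary, together with the uniformity of the Laplace expansion. Near \(x=0\) or \(1\) the expansion of \(\pi\) degenerates: \(f_0\) may vanish or blow up (Topp--Leone has \(x^{\nu-1}\)), and \(n(p_+-p_-)\) need not converge uniformly. To handle this, I will first prove convergence of the processes stopped on exiting \([\varepsilon,1-\varepsilon]\), where the Laplace error is uniform by the smoothness assumption. I will then remove the localisation using the explicit scale and speed functions: the limiting diffusion has generator domain determined by the Feller boundary behaviour, and the chain's hitting estimates near the boundary can be controlled by comparing with the exact reversible measure \(\rho_n\) from Lemma~1, whose scaling limit is \(f\) by Proposition~2. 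If this stopping argument proves too delicate, the fallback is to impose a boundary hypothesis, namely that \(f_0\sim c\,x^{a-1}\) near \(0\) and similarly near \(1\), under which \(\mu_{f_0}\) is continuous up to the boundary and the generator convergence holds uniformly on \([0,1]\).
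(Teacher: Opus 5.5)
Your proposal is sound in outline and takes a different route from the paper. The paper never expands $\pi$ to order $1/n$. It takes the universal second-order coefficient and uses two facts: the reversibility lemma (detailed balance for $\rho_n$, from the Hausdorff recursion) and the local limit $n\rho_n(\lfloor xn\rfloor)\to f_0(x\mid\alpha,\beta)$. Together these identify $f_0(\cdot\mid\alpha,\beta)$ as the stationary density of the limit, and the paper then reads the drift off the stationarity relation $(a\pi)'=\mu\pi$, i.e.\ \eqref{eq:mu}. That is short and shows at once why the prior enters only through $(\log f_0)'$. But it identifies the limit rather than proving convergence: it presupposes that the rescaled chains converge to some reversible diffusion with natural boundary behaviour, and it addresses neither tightness nor the boundary. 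You instead compute the drift directly from the posterior mean, so \eqref{eq:mu0} is verified rather than inferred; your cancellation does reproduce it. Generator convergence plus well-posedness then gives genuine weak convergence. The price is more regularity than the statement assumes.

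Three repairs are needed.
\begin{enumerate}[label=(\roman*)]
\item The second-order coefficient of $\mathcal{L}_ng$ is $\tfrac12(p_++p_-)$, not $\tfrac12\,n(p_++p_-)$. Each $p_\pm\to x(1-x)$, so $p_++p_-\to 2x(1-x)$, and the extra factor $n$ would make the coefficient diverge. You copied that factor from \eqref{eq:sigma}, which carries the same slip.
\item Do not tilt twice. Either integrate against the untilted $f_0$ with $A=b+\alpha$, $B=w-1+\beta$, so that the offsets contribute $\alpha(1-x)-\beta x$ and recombine into $(\log f_0(\cdot\mid\alpha,\beta))'$. Or integrate against $f_0(\cdot\mid\alpha,\beta)$ with $A=b$, $B=w-1$. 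As written you do both.
\item This one is more substantive. Integrating by parts against the Beta kernel, your correction term is $\frac{1}{A+B+2}\,\mathbb{E}[X(1-X)f'(X)]/\mathbb{E}[f(X)]$ with $X\sim\mathrm{Beta}(A+1,B+1)$. Because it is multiplied by $w,b\asymp n$, you need it to converge to $\hat x(1-\hat x)(\log f)'(\hat x)$ locally uniformly. That requires $(\log f_0)'$ to be continuous, which ``differentiable with locally integrable $(\log f_0)'$'' does not give. A discontinuous drift also puts $\mathcal{L}g$ outside $C[0,1]$.
\end{enumerate}

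Since the de-localisation step is only sketched, your argument as it stands proves the theorem under your fallback hypotheses: $f_0\in C^1(0,1)$ with power-law behaviour at the endpoints. That is already more than the paper establishes.

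To reach the stated generality, use the scale/speed description, which is the rigorous form of the paper's own idea. Detailed balance gives $\rho_n(b)\,p_+(b)\bigl(s_n(b+1)-s_n(b)\bigr)=\mathrm{const}$ for the chain's scale function $s_n$. So $n\rho_n\to f_0(\cdot\mid\alpha,\beta)$ and $p_+\to x(1-x)$ yield, after normalisation, convergence of the speed and scale densities to $f_0(x\mid\alpha,\beta)$ and $1/\bigl(x(1-x)f_0(x\mid\alpha,\beta)\bigr)$. Convergence of the chains can then be obtained from convergence of scale and speed (Stone's theorem for birth--death processes), with no order-$1/n$ expansion of $\pi$ at all.
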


Equivalently, \(Y\) solves the stochastic differential equation
\begin{equation}\label{eq:SDE}
\mathrm{d}Y_t=\mu_{f_0}(Y_t;\alpha,\beta)\,\mathrm{d}t+\sqrt{2Y_t(1-Y_t)}\,\mathrm{d}B_t.
\end{equation}
The process \(Y\) is reversible with respect to \(f_0(\,\cdot\mid\alpha,\beta)\), which is its unique stationary density on \((0,1)\). The diffusion coefficient \(2x(1-x)\) is universal, the same Wright--Fisher form for every admissible \(f_0\); only the drift \eqref{eq:mu0} depends on the prior, and it does so only through \((\log f_0)'\).

This is the general form: every up-down chain built from a smooth prior \(f_0\) has a Wright--Fisher-type diffusion limit, with the same \(2x(1-x)\) variance and a drift that is completely explicit once \(f_0\) is known. The Topp--Leone family is a convenient worked example -- convenient because \(f_0=f_\nu(\,\cdot\mid\alpha,\beta)\) has an explicit elementary form and (for integer \(\nu\)) a rational pre-limit transition rule -- but Theorem~4 applies verbatim to any other smooth prior, e.g.\ the finite tournament priors of Sections~2 and the min-max constructions once extended off the lattice by \eqref{eq:f0}, or the linear and geometric-weight examples of Section~7.

\section{Diffusion limit for the Topp--Leone family}

Theorem~4 applies to the up-down chain simply by substituting \(f_0=f_\nu(\,\cdot\mid\alpha,\beta)\propto x^{\nu+\alpha-1}(1-x)^\beta(2-x)^{\nu-1}\): nothing further needs to be proved. Since
\[
\bigl(\log f_\nu(x\mid\alpha,\beta)\bigr)'=\frac{\nu+\alpha-1}{x}-\frac{\beta}{1-x}-\frac{\nu-1}{2-x},
\]
formula \eqref{eq:mu0} gives, after simplification, the drift term
\begin{equation}\label{eq:munu}
\mu_\nu(x;\alpha,\beta)=(\alpha+\nu)-(\alpha+\beta+\nu+2)x-(\nu-1)\frac{x(1-x)}{2-x}.
\end{equation}
Equivalently, writing \(Y\) for the diffusion limit of Theorem~4 with this \(\mu_\nu\), \(Y\) solves the stochastic differential equation
\begin{equation}\label{eq:SDEnu}
\mathrm{d}Y_t=\mu_\nu(Y_t;\alpha,\beta)\,\mathrm{d}t+\sqrt{2Y_t(1-Y_t)}\,\mathrm{d}B_t,
\end{equation}
reversible with respect to \(f_\nu(\,\cdot\mid\alpha,\beta)\), its unique stationary density on \((0,1)\). What is special to integer \(\nu\) is only that the pre-limit transition rule \(\pi_\nu(A,B)\) and the invariant law \(\rho_n\) of \eqref{eq:rho} are then given by the explicit rational and hypergeometric formulas; the diffusion limit itself, and formula \eqref{eq:munu}, hold for every real \(\nu>0\).

\begin{example}[Explicit cases]
For \(\nu=1,2,3\), \eqref{eq:munu} gives
\begin{align*}
\mu_1(x;\alpha,\beta)&=(\alpha+1)-(\alpha+\beta+3)x,\\
\mu_2(x;\alpha,\beta)&=(\alpha+2)-(\alpha+\beta+4)x-\frac{x(1-x)}{2-x},\\
\mu_3(x;\alpha,\beta)&=(\alpha+3)-(\alpha+\beta+5)x-2\frac{x(1-x)}{2-x}.
\end{align*}
The formula for \(\mu_1\) reproduces the drift of the Wright--Fisher diffusion with two-sided immigration at rates \(\alpha+1,\beta+2\). The base Topp--Leone mixing corresponds to the initial parameters \(\alpha=\beta=0\).

For \(\nu\ge2\) the additional term
\[
-(\nu-1)\frac{x(1-x)}{2-x}
\]
is a bounded perturbation of the linear immigration drift, vanishing at both endpoints of \([0,1]\), so the boundaries behave as in the Wright--Fisher case. Because the diffusion coefficient \(x(1-x)\) itself vanishes at \(0\) and \(1\), the sign of \(\mu_\nu(x;\alpha,\beta)\) alone does not indicate the direction in which stationary mass accumulates: by \eqref{eq:mu}, the stationary log-density satisfies
\[
\bigl(\log f_\nu(x\mid\alpha,\beta)\bigr)'=\frac{\mu_\nu(x;\alpha,\beta)-(1-2x)}{x(1-x)},
\]
so that \(f_\nu(\,\cdot\mid\alpha,\beta)\) increases where \(\mu_\nu(x;\alpha,\beta)>1-2x\) and decreases where \(\mu_\nu(x;\alpha,\beta)<1-2x\): the relevant comparison is with \(1-2x=a'(x)\), not with \(0\). The unique mode \(p^*\) of the following lemma is exactly the root in \((0,1)\) of \(\mu_\nu(x;\alpha,\beta)=1-2x\), when one exists.
\end{example}

\begin{lemma}
For \(\nu>0\) and all admissible \(\alpha>-\nu\), \(\beta>-1\), the density \(f_\nu(\,\cdot\mid\alpha,\beta)\) has a unique mode \(p^*\in[0,1)\): it is interior if \(\nu+\alpha>1\), and equals \(0\) (with \(f_\nu(\,\cdot\mid\alpha,\beta)\) strictly decreasing on \((0,1)\)) if \(\nu+\alpha\le1\); in the latter case the density is moreover unbounded as \(p\to0\) when \(\nu+\alpha<1\). In every case \(p^*<1\): since \(\beta+1>0\), mass can accumulate only at the left endpoint, never at the right.
\end{lemma}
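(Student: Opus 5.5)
The plan is to read off the whole shape of \(f_\nu(\,\cdot\mid\alpha,\beta)\) from the sign of its logarithmic derivative, reducing everything to a quadratic. I take the density in the form actually produced by \eqref{eq:TL}--\eqref{eq:TLtilt}, namely
\[
f_\nu(x\mid\alpha,\beta)\propto x^{\nu+\alpha-1}(1-x)^{\beta+1}(2-x)^{\nu-1}.
\]
The exponent of \(1-x\) is \(\beta+1>0\); the last clause of the statement depends on this, and the exponent \(\beta\) written in the previous section should be read accordingly. Put \(c:=\nu+\alpha-1>-1\). Then
\[
(\log f_\nu)'(x)=\frac{c}{x}-\frac{\beta+1}{1-x}-\frac{\nu-1}{2-x}=\frac{N(x)}{x(1-x)(2-x)},
\]
with numerator
\[
N(x):=c(1-x)(2-x)-(\beta+1)x(2-x)-(\nu-1)x(1-x).
\]
On \((0,1)\) the denominator is positive, so \(f_\nu\) increases exactly where \(N>0\). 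This agrees with the criterion \(\mu_\nu>1-2x\) in the preceding example.

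\textbf{Endpoint values and the case \(c>0\).} First I record \(N(0)=2c\) and \(N(1)=-(\beta+1)<0\). The second value is the reason mass can never sit at the right endpoint: near \(1\) we have \(f_\nu\sim\mathrm{const}\cdot(1-x)^{\beta+1}\to0\), so \(p^*<1\) always. Now suppose \(c>0\). Then \(N(0)>0>N(1)\), and \(N\) is a polynomial of degree at most \(2\). It therefore has an odd number of zeros in \((0,1)\), and since it has at most two, it has exactly one, say \(p^*\). Hence \(f_\nu\) increases on \((0,p^*)\) and decreases on \((p^*,1)\), so \(p^*\) is the unique and interior mode. The degenerate case where \(N\) is linear or constant is covered by the same sign-change argument.

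\textbf{The case \(c\le0\).} Here I want \(N<0\) on all of \((0,1)\), which makes \(f_\nu\) strictly decreasing and gives \(p^*=0\). When \(c<0\), the factor \(x^{c}\) shows that \(f_\nu\) is unbounded as \(x\to0\). For \(\nu\ge1\) the sign is immediate, since each of the three terms of \(N\) is \(\le0\) and the middle one is strictly negative on \((0,1)\). For \(\nu<1\) it does not follow from a term-by-term count, because the term \(-(\nu-1)x(1-x)=(1-\nu)x(1-x)\) is positive. My plan there is to use \(c\ge\alpha-1+\nu\) together with the constraint linking \(\alpha\) and \(\nu\) to dominate this positive term by the two negative ones, first checking \(N'(0)\) and then the value at the vertex of the parabola.

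\textbf{Main obstacle.} This last step is the main obstacle, and I suspect the statement needs an extra hypothesis there. Take \(c=0\), \(\nu<1\) and \(\beta+1\) small. Then \(f_\nu\propto(1-x)^{\beta+1}(2-x)^{\nu-1}\). The factor \((2-x)^{\nu-1}\) is increasing, so the density rises first and only later falls, which gives an interior mode. Indeed \(N'(0)=-3c-2(\beta+1)+(1-\nu)\), which is positive in this regime. So I expect to prove the \(c\le0\) clause under \(\nu\ge1\), or more generally under \(1-\nu\le 3c+2(\beta+1)\). I would then verify that this condition suffices for \(N<0\) on \((0,1)\), using convexity or concavity of \(N\) according to the sign of its leading coefficient \(\alpha+\beta+2\nu-1\).
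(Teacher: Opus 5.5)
Your route is the paper's. Its proof also reduces everything to the quadratic \(\varphi\) obtained by clearing denominators in \((\log f_\nu)'\), and your \(N\) is exactly that \(\varphi\): leading coefficient \(2\nu+\alpha+\beta-1\), linear coefficient \(-4\nu-3\alpha-2\beta+2\), constant \(2\nu+2\alpha-2\). Your reading of the exponent of \(1-x\) as \(\beta+1\) is also the one used by the paper's proof and by \eqref{eq:munu}.

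The paper only asserts that ``direct examination'' of \(\varphi\) gives one root in \((0,1)\) when \(\nu+\alpha>1\) and none when \(\nu+\alpha\le1\). For \(\nu+\alpha>1\), your endpoint signs \(N(0)=2(\nu+\alpha-1)>0>-(\beta+1)=N(1)\) supply exactly that examination. Phrase it as an odd number of sign changes, which rules out a double root. The following parts of your proof are then complete:
\begin{itemize}
\item the interior mode when \(\nu+\alpha>1\);
\item \(p^*<1\) in all cases;
\item unboundedness at \(0\) when \(\nu+\alpha<1\);
\item the term-by-term sign count when \(\nu+\alpha\le1\le\nu\).
\end{itemize}

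Your suspicion about the remaining case is right. The defect is in the lemma and in the paper's proof, not in your plan: for \(\nu<1\), the claim that \(\varphi\) has no zero in \((0,1)\) when \(\nu+\alpha\le1\) is false.

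\emph{Counterexample with \(\nu+\alpha=1\).} Take \(\nu=\alpha=\tfrac12\), \(\beta=-\tfrac9{10}\), which is admissible. Then \(f\propto(1-x)^{1/10}(2-x)^{-1/2}\) and \(N(x)=x\bigl(\tfrac3{10}-\tfrac25x\bigr)\). So the unique mode is the interior point \(p^*=\tfrac34\). Numerically, for the unnormalised density, \(f(0)\approx0.707<0.779\approx f(\tfrac34)\).

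\emph{Counterexample with \(\nu+\alpha<1\).} Take \(\nu=\tfrac1{10}\), \(\alpha=\tfrac{89}{100}\), \(\beta=-\tfrac{99}{100}\). Then \(N(0)<0\), \(N(\tfrac12)=\tfrac{21}{100}>0\) and \(N(1)<0\). The density is unbounded at \(0\) but has an interior local maximum, so strict monotonicity fails.

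Hence the lemma needs an extra hypothesis, as you say.

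One correction to your repair. The condition \(N'(0)\le0\) is sufficient, and your convex/concave argument proves it. But it is not more general than \(\nu\ge1\). For \(\nu=1\), \(\alpha=\beta=-\tfrac9{10}\) you get \(N'(0)=\tfrac52>0\), while every term of \(N\) is nonpositive.

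A single hypothesis that covers \(\nu\ge1\) is \(\alpha\le0\). Since \(N'(1)=-\alpha\):
\begin{itemize}
\item a concave \(N\) is then nondecreasing on \([0,1]\), hence bounded by \(N(1)<0\);
\item a convex or linear \(N\) lies below the chord from \(N(0)\le0\) to \(N(1)<0\).
\end{itemize}
Moreover, \(\nu\ge1\) together with \(\nu+\alpha\le1\) forces \(\alpha\le0\). This fits the paper's own follow-up example, which claims the decreasing behaviour only ``for every \(\nu\ge1\) whenever \(\nu+\alpha\le1\)''.
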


\begin{proof}
Differentiating the loglikelihood
\[
\log f_\nu(p\mid\alpha,\beta)=\mathrm{const}+(\nu+\alpha-1)\log p+(\beta+1)\log(1-p)+(\nu-1)\log(2-p)
\]
and clearing denominators shows that the stationary points of \(p\mapsto f_\nu(p\mid\alpha,\beta)\) in \((0,1)\) are the roots of the quadratic \(\varphi(p)=ap^2+bp+c\), where
\begin{align*}
a&=2\nu+\alpha+\beta-1,\\
b&=-4\nu-3\alpha-2\beta+2,\\
c&=2\nu+2\alpha-2.
\end{align*}
As \(p\to0+\), \(f_\nu(p\mid\alpha,\beta)\sim\mathrm{const}\cdot p^{\nu+\alpha-1}\), which vanishes, tends to a finite positive limit, or diverges according as \(\nu+\alpha-1\) is positive, zero, or negative (the singular case remaining integrable since \(\alpha>-\nu\)); as \(p\to1-\), \(f_\nu(p\mid\alpha,\beta)\sim\mathrm{const}\cdot(1-p)^{\beta+1}\to0\), because \(\beta+1>0\) throughout the admissible range. Direct examination of \(\varphi\) on \((0,1)\) then shows it has exactly one root there when \(\nu+\alpha>1\), giving the interior mode, and none when \(\nu+\alpha\le1\), in which case \(f_\nu(\,\cdot\mid\alpha,\beta)\) is strictly decreasing throughout and \(p^*=0\).
\end{proof}

See \cite{KotzVandorp2004} for other parameter regimes.

\begin{example}
For \(\nu=1\), \(\alpha=\beta=0\) (base \(\mathrm{TL}(1)=\mathrm{Beta}(1,2)\)), the drift \(\mu_1(x)=1-3x\) is positive for \(x<1/3\): read naively, it would suggest mass being pushed away from \(0\) on that range. Yet the mode is at \(p^*=0\) by Lemma~2 (here \(\nu+\alpha=1\)), because the correct comparison gives \(\mu_1(x)-(1-2x)=-x<0\) throughout \((0,1)\): the stationary density \(f_1(x\mid0,0)=2(1-x)\) is strictly decreasing everywhere, consistently, even though \(\mu_1(x)\) itself changes sign at \(x=1/3\). The same phenomenon persists for every \(\nu\ge1\) whenever \(\nu+\alpha\le1\): the density's mass is pushed to the left towards \(p^*=0\) not because the raw drift is negative there, but because the vanishing diffusion coefficient at the boundary makes \(\mu_\nu(x)-(1-2x)\), rather than \(\mu_\nu(x)\) itself, the operative quantity.
\end{example}

\begin{remark}
Theorem~4, applied via \eqref{eq:mu0}, makes no reference to \(\nu\) being an integer: formula \eqref{eq:munu} is valid for every real \(\nu>0\), even though the finite-\(n\) transition rule \(\pi_\nu(A,B)\) is then genuinely hypergeometric rather than rational. The restriction to integer \(\nu\) is needed only for the closed rational form of the pre-limit chain.
\end{remark}

\end{document}